\pgfplotsset{compat=1.15}
\definecolor{ffqqqq}{rgb}{1,0,0}
\definecolor{qqzzqq}{rgb}{0,0.6,0}
\newcommand{\R}{{\mathbb R}}
\newtheorem{theorem}{Theorem}[section]
\newtheorem{lemma}[theorem]{Lemma}
\newtheorem{corollary}[theorem]{Corollary}
\DeclareMathOperator{\arcosh}{\mathrm{arcosh}}
\DeclareMathOperator{\artanh}{\mathrm{artanh}}
\DeclareMathOperator{\diam}{\mathrm{diam}}
\DeclareMathOperator{\conv}{\mathrm{conv}}
\DeclareMathOperator{\area}{\mathrm{area}}
\DeclareFontFamily{U}{tipa}{}
\DeclareFontShape{U}{tipa}{m}{n}{<->tipa10}{}
\newcommand{\arc@char}{{\usefont{U}{tipa}{m}{n}\symbol{62}}}%
\newcommand{\arc}[1]{\mathpalette\arc@arc{#1}}
\newcommand{\arc@arc}[2]{%
  \sbox0{$\m@th#1#2$}%
  \vbox{
    \hbox{\resizebox{\wd0}{\height}{\arc@char}}
    \nointerlineskip
    \box0
  }%
}
\title{On the area of ordinary hyperbolic reduced polygons}
\author{\'Ad\'am Sagmeister}
\address{Alfr\'ed R\'enyi Institute of Mathematics,  Re\'altanoda u. 13-15, H-1053 Budapest, Hungary}
\email{sagmeister.adam@gmail.com }
\subjclass[2010]{Primary: 51M09, 51M10, 52A55}
\keywords{convex geometry, hyperbolic geometry, minimal width, thickness, reduced bodies, reduced polygons}
\begin{document}

\begin{abstract}
A convex body $R$ in the hyperbolic plane is reduced if any convex body $K\subset R$ has a smaller minimal width than $R$. We examine the area of a family of hyperbolic reduced $n$-gons, and prove that, within this family, regular $n$-gons have maximal area.
\end{abstract}

\maketitle

\begin{center}
In loving memory of my grandmother.
\end{center}

\section{Introduction}
\label{sec:intro}

The concept of reducedness was introduced by Heil \cite{H78} in 1978 motivated by some volume minimizing problems. A convex body (i. e. a convex compact set of non-empty interior) $K$ is called reduced if an arbitrary convex body $L\subsetneq K$ strictly contained in $K$ has smaller minimal width than $K$. P\'al \cite{Pal} proved in 1921 that for fixed minimal width, the regular triangle has minimal area among convex bodies in the Euclidean plane. This result of his is also known as the isominwidth inequality. The same problem in higher dimensions remains open, as there are no reduced simplices in $\R^n$ for $n\geq 3$ (see \cite{MW02,MS04}), therefore there are no really good candidates for the volume minimizing problems -- so far the best one in $\R^3$ is the so-called Heil body, which has a smaller volume than any rotationally symmetric body of the same minimal width. The problem can be naturally generalized to other spaces, a natural approach is to examine the problem in spaces of constant curvature. Bezdek and Blekherman \cite{Bez00} proved that, if the minimal width is at most $\frac{\pi}{2}$, the regular triangle minimizes the area in $S^2$. However, for spherical bodies of larger minimal width, the minimizers of the isominwidth problem are polars of Reuleaux triangles. Surprisingly enough, there is no solution of the isominwidth problem in the hyperbolic space for arbitrary dimension (this is part of an ongoing work joint with K. J. B\"or\"oczky and A. Freyer).

A reverse isominwidth problem is about finding the maximal volume if the minimal width is fixed. Naturally, this problem does not have a maximizer in general, but for reduced bodies we can ask for the convex body that maximizes the volume. However, in $\R^3$, the diameter of a reduced body of a given minimal width can be arbitrarily large, and hence the Euclidean problem is only interesting on the plane. It is conjectured, that the unique planar reduced bodies maximizing the area and of minimal width $w>0$ in $\R^2$ are the circular disk of radius $\frac{w}{2}$ and the quarter of the disk of radius $w$. A big step towards the proof of this conjecture was made by Lassak, who proved that among reduced $k$-gons, regular ones maximize the area, and as a consequence, all reduced polygons have smaller area than the circle. Following Lassak's footsteps, the same conclusion was derived in $S^2$ by Liu, Chan and Su \cite{LCS}. Interestingly enough, the characterization of hyperbolic reduced polygons is still unclear, but clearly it must be different from the Euclidean and spherical ones; there exist reduced rhombi on the hyperbolic plane, while Euclidean and spherical reduced polygons are all odd-gons (see Lassak \cite{Las90,Las15}). However, the so-called ordinary reduced polygons can be examined the same way (these are odd-gons whose vertices have distance equal to the minimal width of the polygon from the opposite sides such that the projection of the vertices to these sides are in the relative interior of the sides). We have the following main result.

\begin{theorem}
On the hyperbolic plane among ordinary reduced $n$-gons of minimal width $w$, regular $n$-gons of the same minimal width have the greatest area.
\end{theorem}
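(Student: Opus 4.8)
The plan is to parametrize the family of ordinary reduced $n$-gons of fixed minimal width $w$ by a finite collection of angular variables, write the area as a sum of contributions each depending on a single variable, and then extract the regular polygon as the maximizer by applying Jensen's inequality to a suitable concavity statement. First I would fix notation for an ordinary reduced $n$-gon $P=V_1\cdots V_n$ (with $n=2m+1$ odd): from each vertex $V_i$ drop the thickness chord $V_i p_i$ of length $w$, perpendicular to the opposite side $V_{i+m}V_{i+m+1}$, with foot $p_i$ in the relative interior of that side. The defining reducedness property is precisely that all $n$ of these chords share the common length $w$. I would record the two hyperbolic facts I intend to use: the area of a hyperbolic $k$-gon equals its angular defect $(k-2)\pi-\sum(\text{interior angles})$, and the right-triangle relations of hyperbolic trigonometry, in which a right triangle with one leg equal to $w$ and opposite angle $\beta$ has its remaining quantities determined by $w$ and $\beta$.

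Next comes the decomposition. I would cut $P$ along its thickness chords, obtaining a collection of elementary pieces (right triangles, or quadrilaterals split into right triangles) in which one distinguished segment always has the fixed length $w$. Each piece is then governed by a single angular parameter $\beta_i$, the angle between two consecutive thickness chords measured in their natural cyclic order, together with $w$. Using the hyperbolic laws of cosines and sines I would express the area $A(\beta_i;w)$ of each piece as an explicit function of its angle, so that $\area(P)=\sum_{i=1}^n A(\beta_i;w)$. The parameters are constrained by $\beta_i\ge 0$ together with the global relation $\sum_{i=1}^n \beta_i = c_n$, a constant recording the total angle swept by the thickness chords as one traverses the boundary once, which depends only on $n$ and not on the individual polygon; the regular $n$-gon is exactly the admissible configuration $\beta_1=\cdots=\beta_n=c_n/n$.

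The optimization step is then to prove that $\beta\mapsto A(\beta;w)$ is strictly concave on the relevant range of angles. Granting this, Jensen's inequality gives $\sum_i A(\beta_i;w)\le n\,A(c_n/n;w)$ with equality if and only if all the $\beta_i$ coincide, which identifies the regular $n$-gon as the unique maximizer; one also checks that this regular configuration is genuinely an ordinary reduced $n$-gon and hence admissible. The main obstacle I anticipate is the concavity of $A(\cdot;w)$: after differentiating the closed form coming from the hyperbolic trigonometric identities, the sign of the second derivative is not transparent, and controlling it will require the negative curvature of the hyperbolic metric to enter with the right sign. A secondary technical point is to verify that the cyclic parametrization remains valid across the whole family, in particular that the feet $p_i$ stay in the relative interiors of the opposite sides throughout, and to confirm that $c_n$ is indeed independent of the polygon, so that the constraint feeding into Jensen is a genuinely fixed linear one.
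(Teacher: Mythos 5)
Your proposal follows essentially the same route as the paper: the paper cuts the polygon along its thickness chords into \emph{butterflies}, writes the area via Gauss--Bonnet as $\left(n-2\right)\pi-2\sum_i F_w\left(\varphi_i\right)$ where $\varphi_i$ is the angle between consecutive chords at their intersection point, uses that $\sum_i\varphi_i=\pi$ (your constant $c_n$ is in fact $\pi$, independent of $n$, coming from the fact that the composition of isometries matching consecutive chords is a half-turn), and concludes by Jensen's inequality exactly as you propose. Note, however, that the step you flag as the main anticipated obstacle --- the concavity of the single-angle area contribution --- is where nearly all of the paper's technical work lies (a congruence lemma for the two halves of each butterfly, the explicit formula $F_w=f_w\circ g_w$ from hyperbolic right-triangle trigonometry, and a second-derivative sign computation), so your outline matches the paper's strategy but defers its substantive content.
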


\section{Preliminaries}

We use the notation $H^2$ for the hyperbolic plane, which is equipped with the geodesic metric. The geodesic distance of two points $x,y\in H^2$ will be denoted as $d\left(x,y\right)$. In this section, we introduce hyperbolic convexity. Many of the concepts are identical with their Euclidean analogues, but as we will soon see, there are exceptions.

For a subset $X$ of the hyperbolic plane $H^2$, we say that $X$ is \emph{convex}, if for any pair of points $x$ and $y$, the unique geodesic segment $\left[x,y\right]$ connecting $x$ and $y$ is a subset of $X$ (where $\left[x,x\right]=\left\{x\right\}$). A \emph{convex body} is a convex compact set of non-empty interior. It is clear, that similarly to Euclidean convexity, the intersection of an arbitrary family of convex sets in the hyperbolic plane is also convex, so we define the \emph{convex hull} of a set $X\subseteq H^2$ as the intersection of all convex sets in $H^2$ containing $X$ as a subset, and we will use the notation $\conv\left(X\right)$ for the convex hull of $X$. The convex body obtained as the convex hull of the finite set $X=\left\{x_1,\ldots,x_n\right\}$ is called a \emph{polygon}, and we use the notation $\left[x_1,\ldots,x_n\right]$ for $\conv\left(X\right)$. A point $x_j\in X$ is a \emph{vertex} of the polygon $X$ if $x_j\not\in\conv\left(X\setminus\left\{x_j\right\}\right)$; a \emph{$k$-gon} in the hyperbolic plane is a polygon of $k$ vertices.

For convex bodies, width is an important concept. On the hyperbolic plane there are many different notions of width (see Santal\'o \cite{S45}, Fillmore \cite{Fil70}, Leichtweiss \cite{Lei05}, Jer\'onimo-Castro--Jimenez-Lopez \cite{JCJL17}, G. Horv\'ath \cite{Hor21}, B\"or\"oczky--Cs\'epai--Sagmeister \cite{BoCsS}, Lassak \cite{Las24}). We will use the width function introduced by Lassak, but we note that it is identical with the extended Leichtweiss width defined by B\"or\"oczky, Cs\'epai and Sagmeister on supporting lines. A hyperbolic line $\ell$ is called a \emph{supporting line} of the convex body $K$ if $K\cap\ell\neq\emptyset$, and $K$ is contained in one of the closed half-spaces bounded by $\ell$. The \emph{width} of the convex body $K$ with respect to the supporting line $\ell$ is the distance of $\ell$ and $\ell'$, where $\ell'$ is a (not necessarily unique) most distant supporting line from $\ell$. It is known that this width function is continuous, and its maximal value coincides with the diameter of the convex body, which will be denoted as $\diam\left(K\right)$. The \emph{minimal width} (i.e. the minimal value of the width function on the set of all supporting lines, also known as the \emph{thickness}) of $K$ is denoted by $w\left(K\right)$. This notion of minimal width is a monotonic function, that is for arbitrary convex bodies $K,L$ such that $K\subseteq L$, we have $w\left(K\right)\leq w\left(L\right)$.  Hence, the concept of hyperbolic reducedness makes perfect sense. A convex body $K$ is called \emph{reduced}, if for any convex body $K'\subsetneq K$, $w\left(K'\right)<w\left(K\right)$. Reduced bodies are well-studied (see Heil \cite{H78}, Lassak--Martini \cite{LaM05,LaM11,LaM14}, Lassak--Musielak \cite{LaM18r}), as they are often extremizers of volume minimizing problems, and bodies of constant width are also reduced.

If we consider the Poincar\'e disk model of the hyperbolic plane $H^2$, hyperbolic lines are either diameters of the unit disk, or circular arcs intersecting the unit circle orthogonally. The boundary points of the unit disk are the \emph{ideal points} of the hyperbolic plane, and hence there is a natural bijection between hyperbolic lines and pairs of ideal points. Besides the identity, there are three types of orientation preserving isometries of the hyperbolic plane depending the number of fixed points. If there is one fixed point, the isometry is called an \emph{elliptic isometry} (or \emph{rotation}). We call an isometry with exactly one fixed ideal point, it is called a \emph{parabolic isometry}. Finally, isometries with exactly two fixed ideal points are called \emph{hyperbolic isometries}, which maps the line corresponding to the two fixed ideal points to itself.

\section{Ordinary reduced polygons}

Lassak proved that hyperpolic odd-gons of thickness $w$ are reduced if all vertices are of distance $w$ from the opposite sides, and the orthogonal projections of these vertices onto the opposite sides are in the relative interior of these sides (see \cite{Las24}). Such polygons are called \emph{ordinary reduced polygons}, since this property characterizes reducedness both in $\R^2$ and in $S^2$ (see Lassak \cite{Las90,Las15}), but not in the hyperbolic plane. In an ongoing work with Ansgar Freyer and K\'aroly Jr. B\"or\"oczky we show that, for each $w>0$ there are reduced rhombi, whose diameters are unbounded. The characterization of hyperbolic reduced polygons is therefore unclear, so we focus on ordinary reduced polygons in this paper. For the diameter of an ordinary reduced polygon of thickness $w$, we have the following by Lassak \cite{Las24}.

\begin{theorem}\label{thm:diameterbound}
Let $R\subset H^2$ be an ordinary reduced polygon of thickness $w$ and diameter $d$. Then,
$$
w<d<\arcosh\left(\cosh w\sqrt{1+\frac{\sqrt{2}}{2}\sinh w}\right).
$$
\end{theorem}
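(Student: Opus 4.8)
The plan is to treat the two inequalities separately: the lower bound is essentially immediate, while all the difficulty sits in the upper bound.

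\emph{Lower bound.} Since $w$ is the minimal value and $d$ the maximal value of the width function, $w\le d$ is automatic, and only strictness needs an argument. Fix a vertex $v$ and its opposite side $[a,b]$. By the defining property of an ordinary reduced polygon the foot $p$ of the perpendicular from $v$ lies in $\relint{[a,b]}$ and $d(v,p)=w$. Then $v,p,a$ form a hyperbolic right triangle with legs $w$ and $d(p,a)>0$, so the hyperbolic Pythagorean theorem $\cosh d(v,a)=\cosh w\cosh d(p,a)$ forces $d(v,a)>w$, whence $d\ge d(v,a)>w$.

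\emph{Reduction of the upper bound.} First I would show that the diameter is attained between a vertex $v$ and an \emph{endpoint} of its opposite side. Because $R$ is a convex polygon, the map $j\mapsto d(v_i,v_j)$ is unimodal around the boundary cycle, so the farthest vertex from $v_i$ is one of the two most antipodal vertices; for an odd-gon these are exactly the endpoints of the side opposite $v_i$. Writing $[a,b]$ for that side, $p$ for the foot of the perpendicular from $v$, and $\ell=d(p,a)$ for the larger foot-to-endpoint distance, the relation $\cosh d=\cosh w\cosh\ell$ gives $\cosh^2 d=\cosh^2 w\cosh^2\ell$, while the claimed bound $D$ satisfies $\cosh^2 D=\cosh^2 w\bigl(1+\tfrac{\sqrt2}{2}\sinh w\bigr)$. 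Hence the whole theorem reduces to the sharp estimate
\[
\sinh^2\ell<\tfrac{\sqrt2}{2}\sinh w .
\]

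\emph{Heart of the argument.} The key structural input is the cyclic opposition in an odd-gon: the side opposite the endpoint $a$ is the side $[v_{i-1},v]$ \emph{through $v$ itself}, so $a$ likewise carries a perpendicular of length $w$ with foot $q\in\relint{[v_{i-1},v]}$. The two right triangles $vpa$ and $aqv$ share the hypotenuse $[a,v]$ and each has a leg equal to $w$, so they are congruent, $d(q,v)=\ell$, and the configuration is symmetric under the half-turn about the midpoint of $[a,v]$. Encoding the triangle $vpa$ yields $\sinh\ell=\tanh w\cot\beta$ with $\beta=\angle(vab)$, so the target becomes a sharp \emph{lower} bound on the base angle $\beta$. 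Reading convexity as angular monotonicity of the vertices seen from $v$ places the ray $vp$ strictly inside the angle $\angle(av\,v_{i-1})$, which gives the clean preliminary bound $\ell<w$; extracting the exact constant then requires feeding in the interior-foot conditions at \emph{both} endpoints together with the central symmetry.

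I expect the main obstacle to be precisely the passage from the easy bound $\ell<w$ to the sharp one $\sinh^2\ell<\tfrac{\sqrt2}{2}\sinh w$. For large $w$ the sharp bound forces $\ell$ down near $w/2$, far below $w$, so the crude estimate is nowhere near enough: one must use the full strength of the two coupled perpendicular-foot constraints. Concretely this amounts to optimizing $\beta$ over all admissible positions of the neighbouring vertices $b$ and $v_{i-1}$ subject to convexity and to their own interior-foot conditions, i.e.\ a hyperbolic-trigonometric optimization on the centrally symmetric two-triangle figure. The $45^\circ$ flavour of $\tfrac{\sqrt2}{2}=\sin\tfrac{\pi}{4}$ should emerge as the equality configuration of that optimization.
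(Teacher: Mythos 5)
A preliminary remark: the paper offers no proof of this statement at all --- it is quoted from Lassak \cite{Las24} --- so your attempt stands or falls on its own. Your lower bound is complete and correct, and your reduction of the upper bound is sound algebra: with $\ell$ the foot-to-endpoint distance and $\cosh d=\cosh w\cosh\ell$, the claimed bound is exactly equivalent to $\sinh^2\ell<\frac{\sqrt{2}}{2}\sinh w$. The congruence of the two right triangles on the common hypotenuse that you invoke is also genuine (it is the paper's Lemma~\ref{lemma:congruence}, proved there via the law of sines and a Napier analogy).

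However, there are two genuine gaps, one of which is fatal. First, your reduction assumes the diameter is realized by a vertex $v_i$ and an endpoint of its opposite side, justified by the claim that $j\mapsto d(v_i,v_j)$ is unimodal around the boundary of any convex polygon; that general claim is false (non-unimodality of vertex-to-vertex distances in convex polygons is a classical counterexample in the literature on polygon diameter algorithms), so this step must be derived from reducedness --- the interior-foot conditions --- and not from convexity alone. Second, and decisively, the inequality $\sinh^2\ell<\frac{\sqrt{2}}{2}\sinh w$, which after your reduction \emph{is} the theorem, is never proven: you establish (or rather assert) only $\ell<w$, and you say yourself that passing from there to the sharp constant requires an optimization you do not carry out. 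Note that $\ell<w$ yields only $\sinh^2\ell<\sinh^2 w$, which is weaker than the required bound as soon as $\sinh w>\frac{\sqrt{2}}{2}$, i.e.\ for all but small $w$; so your argument stops exactly where the theorem begins. What is missing is a quantitative angle bound (morally, that the angle $\angle\left(t_i,v_i,v_j\right)$ at one of the two diameter endpoints is at most $\pi/4$) extracted from the coupled interior-foot conditions; observing that $\frac{\sqrt{2}}{2}=\sin\frac{\pi}{4}$ is a reasonable guess at the mechanism, but a guess is what it remains.
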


As a consequence, for each $n$ we can expect an $n$-gon of extremal area among ordinary reduced $n$-gons of thickness $w$ by Blaschke's Selection Theorem. In the remainder of the section we will discuss the area of hyperbolic reduced $n$-gons based on the arguments of Lassak \cite{Las05} and Liu--Chang--Su \cite{LCS}.

From now on, $R$ denotes an ordinary reduced $n$-gon in $H^2$ whose vertices are $v_1,\ldots,v_n$ in cyclic order with respect to the positive orientation. For each $i$, let $t_i$ be the orthogonal projection of $v_i$ on the line through $v_{i+\frac{n-1}{2}}$ and $v_{i+\frac{n+1}{2}}$, where the indices are taken mod $n$. By definition, $t_i$ is in the relative interior of $\left[v_{i+\frac{n-1}{2}},v_{i+\frac{n+1}{2}}\right]$, and hence the geodesic segments $\left[v_i,t_i\right]$ and $\left[v_{i+\frac{n+1}{2}},t_{i+\frac{n+1}{2}}\right]$ intersect in a point $p_i$. Let $B_i$ be the union of the two triangles:
$$
B_i=\left[v_i,p_i,t_{i+\frac{n+1}{2}}\right]\cup\left[v_{i+\frac{n+1}{2}},p_i,t_i\right];
$$
we will call $B_i$ a \emph{butterfly}. We observe that these butterflies cover the polygon.

\begin{center}
\includegraphics[scale=0.7]{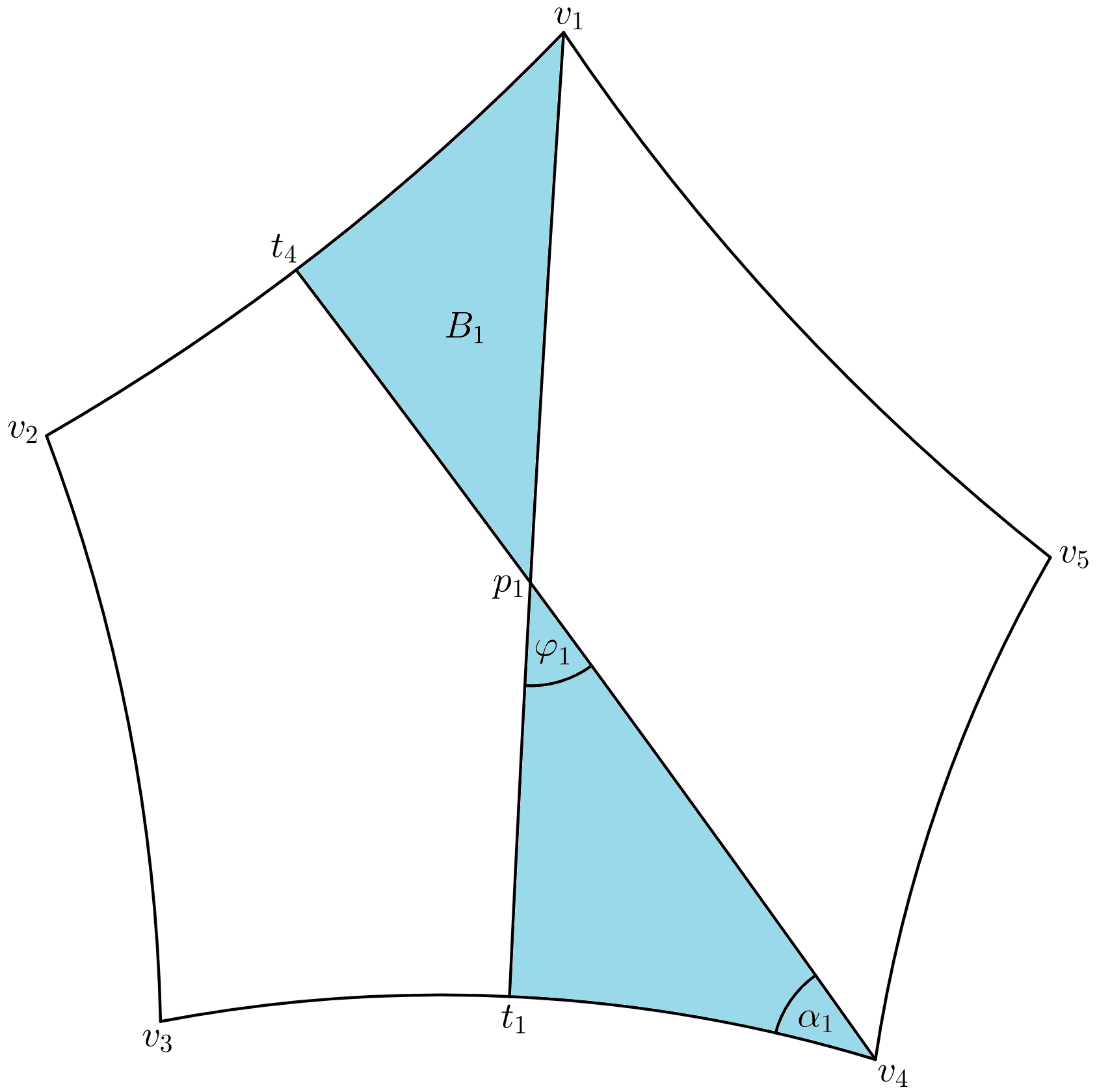}
\end{center}

\begin{lemma}\label{lemma:butterflies}
Let $R\subset H^2$ be an ordinary reduced $n$-gon, and $B_i$ be its $i^{\mathrm{th}}$ butterfly. Then,
$$
R=\bigcup_{i=1}^n B_i.
$$
\end{lemma}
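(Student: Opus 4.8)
The inclusion $\bigcup_{i=1}^n B_i\subseteq R$ needs no work: each $B_i$ is a union of two geodesic triangles whose five defining points $v_i,\ v_{i+\frac{n+1}{2}},\ t_i,\ t_{i+\frac{n+1}{2}},\ p_i$ all lie in the convex body $R$, so convexity gives $B_i\subseteq R$. The whole content of the lemma is therefore the reverse inclusion $R\subseteq\bigcup_i B_i$, i.e.\ that the butterflies leave no gap. I write $m=\frac{n-1}{2}$, so that the side opposite $v_i$ is $\left[v_{i+m},v_{i+m+1}\right]$, the foot is $t_i\in\left[v_{i+m},v_{i+m+1}\right]$, and $B_i$ is built from the two width segments $\left[v_i,t_i\right]$ and $\left[v_{i+m+1},t_{i+m+1}\right]$ crossing at $p_i$ (indices mod $n$).

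The plan is to place an arbitrary $x\in R$ in some butterfly by analysing the arrangement cut inside $R$ by the $n$ width segments. First I would record its combinatorics: since $t_i$ lies on the open arc of $\partial R$ strictly between $v_{i+m}$ and $v_{i+m+1}$, the endpoints of the segments $\left[v_i,t_i\right]$ interlace around $\partial R$ exactly like the endpoints of the ``vertex-to-opposite-edge'' diagonals of a convex $n$-gon, so by convexity any two of them cross; in particular the pair defining $B_i$ crosses at $p_i$. It is then convenient to rewrite the butterfly: the four points $v_i,t_{i+m+1},t_i,v_{i+m+1}$ are in convex position in this cyclic order, the two width segments are the diagonals of the convex quadrilateral $Q_i=\left[v_i,t_{i+m+1},t_i,v_{i+m+1}\right]$, and $B_i$ is exactly $Q_i$ with the two corner triangles $\left[v_i,v_{i+m+1},p_i\right]$ and $\left[t_i,t_{i+m+1},p_i\right]$ deleted. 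Thus $B_i$ covers all of $Q_i$ except the wedge along the chord $\left[v_i,v_{i+m+1}\right]$ and the wedge along the foot chord $\left[t_i,t_{i+m+1}\right]$, and the covering reduces to showing that these deleted wedges are recovered by the neighbouring butterflies.

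The organising device I would use is a side-by-side bookkeeping: on each polygon side $\left[v_j,v_{j+1}\right]$ exactly two butterfly triangles rest, the ``upper'' triangle $\left[v_j,p_j,t_{j+m+1}\right]$ of $B_j$ and the ``lower'' triangle $\left[v_{j+1},p_{j+m+1},t_{j+m+1}\right]$ of $B_{j+m+1}$, meeting at the foot $t_{j+m+1}$; tracking these $2n$ triangles as $j$ runs around the polygon shows that the outer collar of $R$ is covered and identifies which butterflies must fill the deleted wedges. The hard part is the centre. In the regular $n$-gon every width segment passes through the centre $O$, so all $p_i$ coincide, the $2n$ butterfly triangles glue into the $n$ fan triangles $\left[v_j,v_{j+1},O\right]$, and the covering is in fact a clean partition; for a general ordinary reduced $n$-gon the points $p_i$ are distinct and spread into a small inner region, and one must rule out a gap opening between them. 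The key will be that convexity forces the segments to cross in one consistent cyclic pattern, so $p_1,\dots,p_n$ inherit a cyclic order compatible with the pairing $i\leftrightarrow i+m+1$; this monotonicity is what makes the inner triangles interlock without leaving a hole. Controlling the $p_i$ and checking this interlocking near the centre is where the real work lies, whereas the outer collar and the easy inclusion are routine. Notably only convexity of $R$ and the interiority of each $t_i$ enter, so the argument is insensitive to the hyperbolic metric and proceeds exactly as in the Euclidean and spherical settings.
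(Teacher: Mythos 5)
Your set-up is sound: the easy inclusion, the pairwise crossing of the width segments, the convex quadrilateral $Q_i=\left[v_i,t_{i+\frac{n+1}{2}},t_i,v_{i+\frac{n+1}{2}}\right]$ whose diagonals are the two width segments, and the identification of the two butterfly triangles resting on each polygon side are all correct. But the proposal stops exactly where the lemma begins. The covering near the centre, where the points $p_1,\dots,p_n$ spread apart, is the entire content of the statement, and you explicitly defer it: phrases like ``convexity forces the segments to cross in one consistent cyclic pattern'' and ``this monotonicity is what makes the inner triangles interlock without leaving a hole'' describe the conclusion you want, not an argument that produces it --- no mechanism is given that converts the cyclic order of the $p_i$ into the absence of a hole. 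There is also a structural slip: covering the two deleted wedges of each $Q_i$ by neighbouring butterflies would only give $\bigcup_i B_i\supseteq\bigcup_i Q_i$, and the remaining claim $R=\bigcup_i Q_i$ is itself a covering statement of the same nature and difficulty, so your ``reduction'' does not actually reduce the problem.

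The paper proves the lemma by a sweeping argument that bypasses any analysis of the central arrangement. The key observation is that $B_i$ is precisely the union of the chords of $R$ cut out by lines through $p_i$ lying between $\ell_i$ and $\ell_{i+\frac{n+1}{2}}$: such a chord meets $\partial R$ once in $\left[v_i,t_{i+\frac{n+1}{2}}\right]\subset\left[v_i,v_{i+1}\right]$ and once in $\left[t_i,v_{i+\frac{n+1}{2}}\right]$, and its two halves at $p_i$ lie in the two triangles of the butterfly. One then rotates a line about $p_1$ from $\ell_1$ to $\ell_{1+\frac{n+1}{2}}$, slides the pivot along $\ell_{1+\frac{n+1}{2}}$ to $p_{1+\frac{n+1}{2}}$ by a hyperbolic isometry fixing that line, rotates again, and so on; since $2\cdot\frac{n+1}{2}\equiv 1\pmod n$, after $n$ rotations the moving line returns to $\ell_1$, every butterfly has been swept exactly once, and the composition of all these isometries is the half-turn about $p_1$. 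If some $x\in R$ lay on none of the swept chords, then (being in $R$) it would lie off the moving line, hence strictly on one side of it, throughout the sweep; this contradicts the fact that the final position is $\ell_1$ with its two half-planes interchanged. Hence every point of $R$ lies on a swept chord, i.e.\ in some $B_i$. If you prefer to complete your combinatorial route instead, the interlocking claim about the $p_i$ is the step that must actually be proved; as it stands, your write-up asserts it rather than establishes it.
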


\begin{proof}
Let us denote the line containing $v_i$ and $t_i$ by $\ell_i$.
Since $t_i$ is always in the relative interior of $\left[v_{i+\frac{n-1}{2}},v_{i+\frac{n+1}{2}}\right]$, we have readily $B_i\subset R$ by the convexity of $R$, and hence
$$
R\supseteq\bigcup_{i=1}^n B_i.
$$
Now we want to see that every point of $R$ is covered by some butterfly $B_i$. To prove this, we observe that $B_i$ is the union of the chords obtained by continuously rotating the line $\ell_i$ around $p_i$ into $\ell_{i+\frac{n+1}{2}}$, and then these lines are intersected by $R$. Then we apply a hyperbolic isometry that preserves the line $\ell_{i+\frac{n+1}{2}}$ such that the image of $p_i$ is $p_{i+\frac{n+1}{2}}$. After successively mapping the lines $\ell_1,\ell_{1+\frac{n+1}{2}},\ell_2,\ell_{2+\frac{n+1}{2}},\ldots,\ell_{\frac{n+1}{2}},\ell_1$ by using alternating elliptic and hyperbolic isometries, the composition of these isometries will be a rotation around $p_1$ by $\pi$, and hence
$$
R\subseteq\bigcup_{i=1}^n B_i.
$$
\end{proof}

We introduce a few additional notations for some angles of the butterflies. Let
$$
\varphi=\angle\left(v_i,p_i,t_{i+\frac{n+1}{2}}\right)=\angle\left(t_i,p_i,v_{i+\frac{n+1}{2}}\right)
$$
and
$$
\alpha_i=\angle\left(t_i,v_{i+\frac{n+1}{2}},p_i\right).
$$
The following lemma shows that the two triangles involved in the butterfly $B_i$ are congruent.

\begin{lemma}\label{lemma:congruence}
The two triangles $\left[v_i,p_i,t_{i+\frac{n+1}{2}}\right]$ and $\left[v_{i+\frac{n+1}{2}},p_i,t_i\right]$ defining $B_i$ are congruent.
\end{lemma}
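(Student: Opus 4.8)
The plan is to reduce the statement to the hypotenuse--leg congruence theorem of absolute (hence hyperbolic) geometry, applied twice. Throughout write $j=i+\frac{n+1}{2}$ with indices mod $n$, so that $p_i=[v_i,t_i]\cap[v_j,t_j]$ and both perpendicular segments $[v_i,t_i]$, $[v_j,t_j]$ have length $w$. The key observation is that the side of $R$ opposite to $v_i$ runs through $v_j$ and $t_i$, while the side opposite to $v_j$ runs through $v_i$ and $t_j$; since $[v_i,t_i]$ is orthogonal to the former at $t_i$ and $[v_j,t_j]$ is orthogonal to the latter at $t_j$, the triangles $[v_i,v_j,t_i]$ and $[v_i,v_j,t_j]$ each carry a right angle, at $t_i$ and at $t_j$ respectively, and share the common hypotenuse $[v_i,v_j]$. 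In particular each butterfly triangle inherits a right angle ($[v_i,p_i,t_j]$ at $t_j$ and $[v_j,p_i,t_i]$ at $t_i$), because $p_i$ lies on these perpendicular segments. Note that, unlike in the Euclidean setting, equal angles alone will not suffice in $H^2$, so at some point a genuine length equality must be produced.

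First I would compare the two auxiliary triangles $[v_i,v_j,t_i]$ and $[v_i,v_j,t_j]$. They are right-angled, share the hypotenuse $[v_i,v_j]$, and have legs $[v_i,t_i]$ and $[v_j,t_j]$ of equal length $w$; by the hypotenuse--leg theorem they are congruent under the correspondence $v_i\leftrightarrow v_j$, $t_i\leftrightarrow t_j$. Reading off the base angles at the hypotenuse gives $\angle(t_i,v_i,v_j)=\angle(t_j,v_j,v_i)$. Since $p_i\in[v_i,t_i]$ and $p_i\in[v_j,t_j]$, these are exactly the angles $\angle(p_i,v_i,v_j)$ and $\angle(p_i,v_j,v_i)$, so the triangle $[v_i,v_j,p_i]$ has equal base angles and is therefore isosceles. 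This yields the crucial length equality $d(v_i,p_i)=d(v_j,p_i)$.

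It then remains to compare the two butterfly triangles themselves. Subtracting the equal lengths $d(v_i,p_i)=d(v_j,p_i)$ from $d(v_i,t_i)=d(v_j,t_j)=w$ gives $d(p_i,t_j)=d(p_i,t_i)$. The triangles $[v_i,p_i,t_j]$ and $[v_j,p_i,t_i]$ are right-angled (at $t_j$ and $t_i$), have equal hypotenuses $[v_i,p_i]$, $[v_j,p_i]$, and equal legs $[p_i,t_j]$, $[p_i,t_i]$, so a second application of the hypotenuse--leg theorem finishes the proof. Equivalently, these two triangles share the vertical angle $\varphi$ at $p_i$ together with a right angle and the equal included side $d(p_i,t_j)=d(p_i,t_i)$, so the ASA criterion applies. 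I expect the only real obstacle to be the middle step: spotting that the two right triangles erected on the common chord $[v_i,v_j]$ force $[v_i,v_j,p_i]$ to be isosceles, which is what supplies the single length equality that hyperbolic geometry, lacking similarity, genuinely requires.
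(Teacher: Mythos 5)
Your proof is correct, but it takes a genuinely different route from the paper's. Both arguments hinge on the same auxiliary pair of right triangles $\left[v_i,v_j,t_i\right]$ and $\left[v_i,v_j,t_j\right]$ (writing $j=i+\frac{n+1}{2}$), which share the hypotenuse $\left[v_i,v_j\right]$ and have legs of common length $w$; but the paper proves their congruence \emph{trigonometrically} -- the hyperbolic law of sines gives one pair of equal angles, and a Napier analogy then gives the second pair -- and finishes by pure angle bookkeeping: it shows that the angle of $\left[v_i,p_i,t_j\right]$ at $v_i$ equals $\alpha_i$, so the two butterfly triangles have all three angles pairwise equal (a right angle, the vertical angle $\varphi$, and $\alpha_i$), and congruence follows from the AAA criterion, which is valid in hyperbolic geometry. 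You instead stay entirely within neutral (absolute) geometry: hypotenuse--leg for the auxiliary triangles, then the fact that a triangle with two equal angles is isosceles, applied to $\left[v_i,v_j,p_i\right]$ -- a triangle that never appears in the paper -- to get $d\left(v_i,p_i\right)=d\left(v_j,p_i\right)$, then subtraction along the two perpendiculars to get $d\left(p_i,t_i\right)=d\left(p_i,t_j\right)$, and hypotenuse--leg (or ASA) once more. Your route is more elementary in that it avoids hyperbolic trigonometry and any criterion special to hyperbolic geometry, and it yields the length equalities $d\left(p_i,t_i\right)=d\left(p_i,t_j\right)=b_i$ and $d\left(p_i,v_i\right)=d\left(p_i,v_j\right)=c_i=w-b_i$ explicitly, which the paper only extracts from the lemma afterwards, in the proof of Theorem~\ref{thm:area}; the paper's route, in exchange, sets up exactly the trigonometric relations it reuses there. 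One side remark of yours is wrong, though: you claim that ``equal angles alone will not suffice in $H^2$,'' so that a genuine length equality must be produced. It is exactly the other way around -- AAA is a congruence criterion in the hyperbolic plane (it is in the Euclidean plane that equal angles only give similarity), and the paper's proof implicitly relies on this fact. The error is harmless, since your argument never uses the claim, but the caution that motivated your hunt for a length equality was unnecessary.
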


\begin{proof}
It is enough to show that $\angle\left(p_i,t_i,v_{i+\frac{n+1}{2}}\right)=\alpha_i$. To verify this, we show that the triangles $\left[v_i,t_i,v_{i+\frac{n+1}{2}}\right]$ and $\left[v_{i+\frac{n+1}{2}},v_i,t_{i+\frac{n+1}{2}}\right]$ are congruent.

These right triangles share a hypotenuse $\left[v_i,v_{i+\frac{n+1}{2}}\right]$, and furthermore
$$
d\left(v_i,t_i\right)=d\left(v_{i+\frac{n+1}{2}},t_{i+\frac{n+1}{2}}\right)=w.
$$
Thus, by the hyperbolic law of sines we get
$$
\sin\angle\left(t_i,v_{i+\frac{n+1}{2}},v_i\right)=\frac{\sinh w}{\sinh d\left(v_i,v_{i+\frac{n+1}{2}}\right)}=\sin\angle\left(v_i,t_{i+\frac{n+1}{2}},v_{i+\frac{n+1}{2}}\right).
$$
Since the angle sum of a hyperbolic triangle is always less than $\pi$, we deduce
$$
\angle\left(t_i,v_{i+\frac{n+1}{2}},v_i\right)=\angle\left(v_i,t_{i+\frac{n+1}{2}},v_{i+\frac{n+1}{2}}\right).
$$
We recall the following Napier analogy: for a hyperbolic triangle of sides $a,b,c$ and opposite angles $\alpha,\beta,\gamma$ we have
$$
\cot\frac{\gamma}{2}=\tan\frac{\alpha+\beta}{2}\cdot\frac{\cosh\frac{a+b}{2}}{\cosh\frac{a-b}{2}}.
$$
This implies the equality of the angles
$$
\angle\left(v_{i+\frac{n+1}{2},v_i,t_i}\right)=\angle\left(t_{i+\frac{n+1}{2}},v_{i+\frac{n+1}{2}},v_i\right),
$$
so the triangles $\left[v_i,t_i,v_{i+\frac{n+1}{2}}\right]$ and $\left[v_{i+\frac{n+1}{2}},v_i,t_{i+\frac{n+1}{2}}\right]$ are indeed congruent. Finally, we get
\begin{gather*}
\angle\left(t_i,v_i,t_{i+\frac{n+1}{2}}\right)=\angle\left(v_{i+\frac{n+1}{2}},v_i,t_{i+\frac{n+1}{2}}\right)-\angle\left(v_{i+\frac{n+1}{2}},v_i,t_i\right)=\\
=\angle\left(t_i,v_{i+\frac{n+1}{2}},v_i\right)-\angle\left(t_{i+\frac{n+1}{2}},v_{i+\frac{n+1}{2}},v_i\right)=\alpha_i.
\end{gather*}
\end{proof}

\section{Area of ordinary reduced polygons}\label{sec:area}

As a consequence of Lemma~\ref{lemma:congruence}, we can derive the following area formula. Let $F_w\left(x\right)=f_w\left(g_w\left(x\right)\right)$, where
$$
f_w\left(x\right)=\arcsin\frac{x\sqrt{1-\tanh^2 w}}{\tanh w-x}\text{ and }g_w\left(x\right)=\frac{1+\cos x-\sqrt{\left(1+\cos x\right)^2-4\tanh^2 w\cos x}}{2\tanh w}.
$$
\begin{theorem}\label{thm:area}
Let $R\subset H^2$ be an ordinary reduced $n$-gon of thickness $w$ described as above. Then, its area is
$$
\area\left(R\right)=\left(n-2\right)\pi-2\sum_{i=1}^n F_w\left(\varphi_i\right).
$$
\end{theorem}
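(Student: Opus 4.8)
The plan is to obtain the area from the Gauss--Bonnet theorem and then to reduce the resulting angle sum to the butterfly data $\alpha_i$ and $\varphi_i$. Since $R$ is a convex geodesic $n$-gon with interior angles $\gamma_1,\dots,\gamma_n$, Gauss--Bonnet gives $\area(R)=(n-2)\pi-\sum_{i=1}^n\gamma_i$. Comparing with the claimed formula, it therefore suffices to prove the two reductions $\sum_{i=1}^n\gamma_i=2\sum_{i=1}^n\alpha_i$ and $\alpha_i=F_w(\varphi_i)$.

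First I would record the right-angle structure of a butterfly. Because $t_{i+\frac{n+1}{2}}$ is the foot of the perpendicular from $v_{i+\frac{n+1}{2}}$ onto the side $[v_i,v_{i+1}]$, it lies on that side and $[p_i,t_{i+\frac{n+1}{2}}]$ meets it orthogonally, so $[v_i,p_i,t_{i+\frac{n+1}{2}}]$ is right-angled at $t_{i+\frac{n+1}{2}}$; symmetrically $[v_{i+\frac{n+1}{2}},p_i,t_i]$ is right-angled at $t_i$. By Lemma~\ref{lemma:congruence} both have angles $\tfrac{\pi}{2}$, $\varphi_i$ at $p_i$, and $\alpha_i$ at the remaining vertex. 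Now observe that the cevian $[v_j,t_j]$ enters the interior of $R$ (as $t_j$ is interior to the opposite side), hence it splits $\gamma_j$ into the sub-angle toward $[v_{j-1},v_j]$ and the sub-angle toward $[v_j,v_{j+1}]$. The two congruent triangles say exactly that the sub-angle of $\gamma_i$ toward $[v_i,v_{i+1}]$ equals $\alpha_i$, while the sub-angle of $\gamma_{i+\frac{n+1}{2}}$ toward $[v_{i+\frac{n-1}{2}},v_{i+\frac{n+1}{2}}]$ also equals $\alpha_i$. Since $i\mapsto i+\frac{n+1}{2}$ is a bijection mod $n$, summing shows each $\alpha_i$ is counted once as a ``right'' sub-angle and once as a ``left'' sub-angle, so $\sum_j\gamma_j=2\sum_i\alpha_i$.

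It then remains to show $\alpha_i=F_w(\varphi_i)$, which I would do by hyperbolic right-triangle trigonometry in $[v_{i+\frac{n+1}{2}},p_i,t_i]$. Writing $a=d(v_{i+\frac{n+1}{2}},p_i)$ and $b=d(p_i,t_i)$, Lemma~\ref{lemma:congruence} also gives $d(p_i,t_{i+\frac{n+1}{2}})=b$, so the width segment $[v_{i+\frac{n+1}{2}},t_{i+\frac{n+1}{2}}]$ of length $w$ supplies the crucial constraint $a+b=w$. The adjacent-leg relation $\tanh b=\tanh a\,\cos\varphi_i$ together with the addition law $\tanh(a+b)=\tanh w$ yields, after clearing denominators, the quadratic $\tanh w\,(\tanh b)^2-(1+\cos\varphi_i)\tanh b+\tanh w\,\cos\varphi_i=0$; its root with $b<w$ is precisely $\tanh b=g_w(\varphi_i)$. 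Finally, from $\sin\alpha_i=\sinh b/\sinh a$ with $a=w-b$, expanding $\sinh(w-b)$ and dividing by $\cosh b$ gives $\sin\alpha_i=\tanh b\sqrt{1-\tanh^2 w}\,/(\tanh w-\tanh b)$, that is $\alpha_i=f_w(\tanh b)=f_w(g_w(\varphi_i))=F_w(\varphi_i)$. Substituting back into the Gauss--Bonnet expression completes the proof.

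I expect the main obstacle to be the bookkeeping of the second step rather than the trigonometry: one must verify carefully that each cevian lies inside the interior angle it subdivides and, using the congruence, keep precise track of which sub-angle of which vertex equals $\alpha_i$, so that the reindexed sum collapses cleanly to $2\sum_i\alpha_i$ (and in particular so that one does not mistakenly add the overlapping butterfly areas, since the butterflies cover $R$ with overlap). The computation producing $g_w$ and $f_w$ is delicate but essentially mechanical, the only genuine care being the choice of the correct root of the quadratic.
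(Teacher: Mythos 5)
Your proof is correct and follows essentially the same route as the paper: Gauss--Bonnet together with Lemma~\ref{lemma:congruence} to reduce the interior angle sum to $2\sum_{i}\alpha_i$, then right-triangle trigonometry in $\left[v_{i+\frac{n+1}{2}},p_i,t_i\right]$ yielding the same quadratic for $\tanh b_i$, the same choice of root $g_w\left(\varphi_i\right)$, and the same sine formula giving $\alpha_i=F_w\left(\varphi_i\right)$. The only differences are presentational: you spell out the bookkeeping $\sum_j\gamma_j=2\sum_i\alpha_i$ that the paper compresses into one sentence, and you reach $\sin\alpha_i=\frac{\tanh b_i\sqrt{1-\tanh^2w}}{\tanh w-\tanh b_i}$ by expanding $\sinh\left(w-b_i\right)$ directly rather than via the paper's $\artanh$ substitutions.
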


\begin{proof}
We know from Lemma~\ref{lemma:congruence} and the Gauss--Bonnet formula that the area of $R$ is
$$
\area\left(R\right)=\left(n-2\right)\pi-2\sum_{i=1}^n \alpha_i,
$$
so we only have to calculate $\alpha_i$.
Let $b_i=d\left(p_i,t_i\right)$ and $c_i=d\left(p_i,v_{i+\frac{n+1}{2}}\right)$. From the law of sines, we have
$$
\sin\alpha_i=\frac{\sinh b_i}{\sinh c_i},
$$
so it renains to express $b_i$ and $c_i$ using $\varphi$ and $w$. By Lemma~\ref{lemma:congruence}, we have $c_i=w-b_i$, and then for the right triangle $\left[v_{i+\frac{n+1}{2}},p_i,t_i\right]$ we have
\begin{equation}\label{eq:cosphi}
\cos\varphi_i=\frac{\tanh b_i}{\tanh c_i}=\frac{\tanh b_i}{\tanh\left(w-b_i\right)}=\frac{\tanh b_i\left(1-\tanh w \tanh b_i\right)}{\tanh w-\tanh b_i}. 
\end{equation}
This leads to a quadratic equation for $\tanh b_i$, whose solutions are
$$
\tanh b_i=\frac{1+\cos\varphi_i\pm\sqrt{\left(1+\cos\varphi_i\right)^2-4\tanh^2 w\cos\varphi_i}}{2\tanh w}.
$$
However, it is easy to see that
$$
\frac{1+\cos\varphi_i+\sqrt{\left(1+\cos\varphi_i\right)^2-4\tanh^2 w\cos\varphi_i}}{2\tanh w}>\tanh w,
$$
so from $b_i<w$ we get
\begin{equation}\label{eq:tanhbi}
\tanh b_i=\frac{1+\cos\varphi_i-\sqrt{\left(1+\cos\varphi_i\right)^2-4\tanh^2 w\cos\varphi_i}}{2\tanh w}=g_w\left(\varphi_i\right)
\end{equation}
as the only solution. Using the identity
$$
\sinh\left(\artanh x\right)=\frac{x}{\sqrt{1-x^2}},
$$
we get
$$
\sinh b_i=\frac{g_w\left(\varphi_i\right)}{\sqrt{1-g_w\left(\varphi_i\right)^2}}
$$
and
\begin{gather}\label{eq:sinhci}
\sinh c_i=\sinh\left(\artanh\left(\frac{\tanh w-\tanh b_i}{1-\tanh w \tanh b_i}\right)\right)=\sinh\left(\artanh\left(\frac{\tanh w-g_w\left(\varphi_i\right)}{1-\tanh w \cdot g_w\left(\varphi_i\right)}\right)\right)=\\
=\frac{\frac{\tanh w-g_w\left(\varphi_i\right)}{1-\tanh w \cdot g_w\left(\varphi_i\right)}}{\sqrt{1-\left(\frac{\tanh w-g_w\left(\varphi_i\right)}{1-\tanh w \cdot g_w\left(\varphi_i\right)}\right)^2}}=\frac{\tanh w-g_w\left(\varphi_i\right)}{\sqrt{\left(1-\tanh^2 w\right)\left(1-g_w\left(\varphi_i\right)^2\right)}},
\end{gather}
and hence
$$
\alpha_i=\arcsin\left(\frac{\frac{g_w\left(\varphi_i\right)}{\sqrt{1-g_w\left(\varphi_i\right)^2}}}{\frac{\tanh w-g_w\left(\varphi_i\right)}{\sqrt{\left(1-\tanh^2 w\right)\left(1-g_w\left(\varphi_i\right)^2\right)}}}\right)=f_w\left(g_w\left(\varphi_i\right)\right)=F_w\left(\varphi_i\right).
$$
\end{proof}

Now we are ready to prove our main result.

\begin{theorem}\label{thm:extremum}
Let $R\subset H^2$ be an ordinary reduced $n$-gon of thickness $w$, and let $\widetilde{R}$ be a regular $n$-gon of thickness $w$. Then,
$$
\area\left(R\right)\leq\area\left(\widetilde{R}\right)
$$
with equality if and only if $R$ and $\widetilde{R}$ are congruent.
\end{theorem}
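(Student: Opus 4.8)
The plan is to reduce the theorem to a one–variable convexity statement and then apply Jensen's inequality. By Theorem~\ref{thm:area} we have $\area(R)=(n-2)\pi-2\sum_{i=1}^n F_w(\varphi_i)$, so maximizing the area among ordinary reduced $n$-gons of thickness $w$ is the same as minimizing $\sum_{i=1}^n F_w(\varphi_i)$. For the regular $n$-gon $\widetilde R$ symmetry forces all butterfly angles to coincide, and a direct computation gives $\varphi_i=\pi/n$, so $\area(\widetilde R)=(n-2)\pi-2nF_w(\pi/n)$. Hence it suffices to prove $\sum_{i=1}^n F_w(\varphi_i)\ge nF_w(\pi/n)$ under the constraint imposed by the geometry of $R$.

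The first step is to identify that constraint, and I claim it is $\sum_{i=1}^n\varphi_i=\pi$. This is exactly the closure condition appearing in the proof of Lemma~\ref{lemma:butterflies}: ordering the width chords $\ell_1,\ell_{1+\frac{n+1}{2}},\ldots$ and composing the alternating elliptic and hyperbolic isometries that carry each chord to the next returns the initial chord to itself after a rotation by $\pi$ about $p_1$; the hyperbolic steps contribute no rotation, while the elliptic steps contribute rotations by the angles $\varphi_i$, whose total must therefore be $\pi$. One checks the regular case directly: the $n$ axes of symmetry of $\widetilde R$ split the full angle at the centre into $2n$ equal sectors, giving $\varphi_i=\pi/n$ and $\sum\varphi_i=\pi$. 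Moreover, since each butterfly triangle is a right triangle with leg $b_i=d(p_i,t_i)$ and hypotenuse $c_i=w-b_i$ by Lemma~\ref{lemma:congruence}, the relation $\cos\varphi_i=\tanh b_i/\tanh(w-b_i)$ forces $b_i<w/2$, so every $\varphi_i$ lies in $(0,\pi/2)$; on this interval the radicand satisfies $(1+\cos x)^2-4\tanh^2w\cos x>(1-\cos x)^2\ge 0$, so $F_w$ is well defined there.

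Granting the constraint, the theorem follows from the strict convexity of $F_w$ on $(0,\pi/2)$: by Jensen's inequality,
$$
\sum_{i=1}^n F_w(\varphi_i)\ge nF_w\left(\frac{1}{n}\sum_{i=1}^n\varphi_i\right)=nF_w\left(\frac{\pi}{n}\right),
$$
with equality precisely when all $\varphi_i$ coincide. This yields $\area(R)\le\area(\widetilde R)$, and in the equality case $\varphi_1=\cdots=\varphi_n=\pi/n$. Since $\varphi_i$ determines $b_i$, hence the whole butterfly $B_i$, through $g_w$, all butterflies then become congruent; combined with the closure condition this pins down $R$ as the regular $n$-gon, so $R$ and $\widetilde R$ are congruent.

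The main obstacle is the convexity of $F_w=f_w\circ g_w$; the explicit formula is too unwieldy to differentiate twice directly, so I would instead use the geometric parametrization by $b\in(0,w/2)$ with $c=w-b$, in which $\varphi$ and $\alpha=F_w(\varphi)$ are governed by $\cos\varphi=\tanh b/\tanh(w-b)$ and $\sin\alpha=\sinh b/\sinh(w-b)$. Differentiating implicitly gives $\frac{d\alpha}{d\varphi}=\alpha'(b)/\varphi'(b)$, and since $\varphi'(b)<0$, convexity of $\alpha$ as a function of $\varphi$ is equivalent to $\frac{d}{db}\bigl(\alpha'(b)/\varphi'(b)\bigr)$ being negative on $(0,w/2)$. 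Establishing this sign — ideally after the substitution $s=\tanh b$, $t=\tanh(w-b)$, which turns the hyperbolic identities into rational relations in $s,t$ linked by $\tanh w=(s+t)/(1+st)$ — is the technical heart of the argument, and I expect it to reduce, after clearing denominators, to the positivity of an explicit polynomial in $s$, $t$, and $\tanh w$ on the relevant region.
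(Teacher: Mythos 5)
Your high-level strategy (the area formula of Theorem~\ref{thm:area} plus Jensen's inequality applied to $F_w$) is the same as the paper's, but your write-up contains one genuine error and one acknowledged but unfilled hole. The error is the claimed constraint $\sum_{i=1}^n\varphi_i=\pi$ and its justification. The principle you invoke --- that in the composition of alternating elliptic and hyperbolic isometries ``the hyperbolic steps contribute no rotation, while the elliptic steps contribute rotations by the angles $\varphi_i$, whose total must therefore be $\pi$'' --- is Euclidean and fails in $H^2$: rotation angles are not additive under composition with translations. (For instance, composing the translations along the three sides of a geodesic triangle gives not the identity but a rotation by its angular defect; this holonomy is exactly what your bookkeeping ignores.) In fact $\sum_{i=1}^n\varphi_i\leq\pi$, with strict inequality unless all the chords $\left[v_i,t_i\right]$ pass through one point: already for $n=3$ the angles $\varphi_1,\varphi_2,\varphi_3$ are the interior angles of the geodesic triangle $\left[p_1,p_2,p_3\right]$, so their sum is $\pi$ minus its area, and in general Gauss--Bonnet applied to the closed geodesic path through the $p_i$'s (which has turning number $\frac{n-1}{2}$ and exterior angles $\pi-\varphi_i$) gives $\sum\varphi_i=\pi$ minus the enclosed area counted with multiplicity. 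Your Jensen step $nF_w\bigl(\frac{1}{n}\sum\varphi_i\bigr)=nF_w\bigl(\frac{\pi}{n}\bigr)$ therefore collapses; to save it you must additionally prove that $F_w$ is strictly \emph{decreasing} and use $F_w\bigl(\frac{1}{n}\sum\varphi_i\bigr)\geq F_w\bigl(\frac{\pi}{n}\bigr)$, and the equality discussion must then require both that all $\varphi_i$ coincide and that $\sum\varphi_i=\pi$. Monotonicity is available --- the paper's formula for $F_w'$ is visibly negative --- but it appears nowhere in your proposal; the paper itself is terse about this constraint, yet its computation of $F_w'<0$ supplies the ingredient your version is missing.

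The second gap is that the strict convexity of $F_w$, which you yourself call the technical heart, is never proved. Your reduction --- parametrize by $b\in\left(0,w/2\right)$, note $\varphi'(b)<0$, and show $\frac{d}{db}\left(\alpha'(b)/\varphi'(b)\right)<0$ --- is a correct reformulation and a reasonable alternative to the paper's route (the paper differentiates $F_w=f_w\circ g_w$ twice and obtains $F_w''>0$ from the pointwise inequality $\left(1+\cos x\right)^2-4\tanh^2 w-\left(1+\cos x\right)r_w\left(x\right)=r_w\left(x\right)\left(r_w\left(x\right)-1-\cos x\right)+4\tanh^2 w\left(\cos x-1\right)<0$). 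But ``I expect it to reduce to the positivity of an explicit polynomial'' is a plan, not a proof: no polynomial is exhibited and no sign is verified, so the convexity --- and with it the theorem --- remains unproven in your proposal.
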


\begin{proof}
Using the notations introduced earlier, Lemma~\ref{thm:area} says that
$$
\area\left(R\right)=\left(n-2\right)\pi-2\sum_{i=1}^n F_w\left(\varphi_i\right)
$$
and
$$
\area\left(\widetilde{R}\right)=\left(n-2\right)\pi-2nF_w\left(\frac{\pi}{n}\right).
$$
Let us show that $F_w$ is concave on the $\left(0,\max_i\varphi_i\right]$ interval. For the sake of simplicity, we introduce
$$
r_w\left(x\right)=\sqrt{\left(1+\cos x\right)^2-4\tanh^2 w\cos x}.
$$
After differentiating, we get
\begin{gather*}
F_w'\left(x\right)=\frac{g_w'\left(x\right)\sqrt{1-\tanh^2 w}\left(\tanh w-g_w\left(x\right)\right)+g_w\left(x\right)\sqrt{1-\tanh^2 w}g_w'\left(x\right)}{\sqrt{1-\left(\frac{g_w\left(x\right)\sqrt{1-\tanh^2 w}}{\tanh w-g_w\left(x\right)}\right)^2}\left(\tanh w-g_w\left(x\right)\right)^2}=\\
=\frac{\tanh w\sqrt{1-\tanh^2 w}g_w'\left(x\right)}{\left(\tanh w-g_w\left(x\right)\right)\sqrt{\left(\tanh w-g_w\left(x\right)\right)^2-\left(1-\tanh^2 w\right) g_w^2\left(x\right)}}=\\
=\frac{-\tanh w\sqrt{1-\tanh^2 w}\sin x}{r_w\left(x\right)\sqrt{\left(\tanh w-g_w\left(x\right)\right)^2-\left(1-\tanh^2 w\right) g_w^2\left(x\right)}}=\\
=\frac{-\sqrt{2}\tanh w\sqrt{1-\tanh^2 w}\sin x}{r_w\left(x\right)\sqrt{\left(1-\cos x\right)\left(-\left(1+\cos x\right)+2\tanh^2 w+r_w\left(x\right)\right)}}.
\end{gather*}
Now we calculate the second derivative. We substitute $r_w'\left(x\right)=\frac{-\sin x}{r_w\left(x\right)}\left(1+\cos x-2\tanh^2 w\right)$ to obtain the following.
\begin{gather*}
F_w''\left(x\right)=\left(-\sqrt{2}\tanh w\sqrt{1-\tanh^2 w}\right)\cdot\\
\frac{\cos x\cdot r_w\left(x\right)\sqrt{1-\cos x}+\sin^2 x\frac{1+\cos x-2\tanh^2 w}{r_w\left(x\right)}\sqrt{1-\cos x}-\sin^2 x\frac{r_w\left(x\right)+\left(1-\cos x\right)}{2\sqrt{1-\cos x}}}{r_w^2\left(x\right)\left(1-\cos x\right)\left(-1-\cos x+2\tanh^2 w+r_w\left(x\right)\right)}=\\
=\left(-\frac{\sqrt{2}}{2}\tanh w\sqrt{1-\tanh^2 w}\right)\cdot\sqrt{\frac{1-\cos x}{-1-\cos x+2\tanh^2 w+r_w\left(x\right)}}\cdot\\
\left(\left(1+\cos x\right)^2-4\tanh^2 w-\left(1+\cos x\right)r_w\left(x\right)\right).
\end{gather*}
We observe that
$$
\left(1+\cos x\right)^2-4\tanh^2 w-\left(1+\cos x\right)r_w\left(x\right)=r_w\left(x\right)\left(r_w\left(x\right)-1-\cos x\right)+4\tanh^2 w\left(\cos x-1\right)<0,
$$
and hence $-F_w$ is strictly concave in the interval $\left(0,\max_i\varphi_i\right]$. Finally, Jensen's inequality concludes the proof.
\end{proof}

\begin{corollary}
The area of a regular $\left(2k+1\right)$-gon of a fixed thickness is a strictly increasing monotonic function of $k$. In particular, all ordinary reduced polygons have a smaller area than the circle of the same thickness.
\end{corollary}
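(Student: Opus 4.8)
The plan is to derive both claims from the regular-polygon area formula together with the strict convexity of $F_w$ already established in the proof of Theorem~\ref{thm:extremum}. By that theorem every ordinary reduced $n$-gon of thickness $w$ has area at most $(n-2)\pi-2nF_w(\pi/n)$, and such polygons have an odd number $n=2k+1$ of vertices. Writing
$$
\mathcal{A}(n)=(n-2)\pi-2nF_w\left(\frac{\pi}{n}\right)
$$
and regarding $\mathcal{A}$ as a function of a real variable $n\geq 3$, it then suffices to prove that $\mathcal{A}$ is strictly increasing and that $\mathcal{A}(n)$ tends to the area of the disk of thickness $w$ as $n\to\infty$; the corollary follows at once, since $k\mapsto\mathcal{A}(2k+1)$ is then strictly increasing and stays strictly below its limit.

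For the monotonicity I would differentiate and set $x=\pi/n$, obtaining
$$
\mathcal{A}'(n)=\pi-2F_w(x)+2xF_w'(x)=:h(x).
$$
The key observation is that $h(0)=0$: a direct evaluation gives $g_w(0)=\tanh\frac{w}{2}$, whence $F_w(0)=f_w(\tanh\frac{w}{2})=\arcsin 1=\frac{\pi}{2}$, while $2xF_w'(x)\to 0$ as $x\to 0^+$ because $F_w'$ extends continuously to $0$. Differentiating once more gives the clean identity $h'(x)=2xF_w''(x)$. Since $x=\pi/n\leq\pi/3$ forces $\cos x>0$, the sign computation at the end of the proof of Theorem~\ref{thm:extremum} applies and yields $F_w''(x)>0$ on this range, so $h'(x)>0$ and hence $h(x)>h(0)=0$ for $x\in(0,\pi/3]$. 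Thus $\mathcal{A}'(n)>0$, $\mathcal{A}$ is strictly increasing, and in particular so is $k\mapsto\mathcal{A}(2k+1)$.

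For the limiting value I would read off the one-sided derivative $F_w'(0^+)$ from the explicit first-derivative formula in the proof of Theorem~\ref{thm:extremum}. Using $\sin x\sim x$, $1-\cos x\sim x^2/2$, $r_w(0)=2/\cosh w$ and $\sqrt{1-\tanh^2 w}=1/\cosh w$, the vanishing factors in numerator and denominator cancel and leave $F_w'(0^+)=-\cosh\frac{w}{2}$. The first-order expansion $F_w(\pi/n)=\frac{\pi}{2}-\cosh\frac{w}{2}\cdot\frac{\pi}{n}+o(1/n)$ then gives
$$
\lim_{n\to\infty}\mathcal{A}(n)=\lim_{n\to\infty}\left[(n-2)\pi-2nF_w\left(\frac{\pi}{n}\right)\right]=2\pi\left(\cosh\frac{w}{2}-1\right),
$$
which is exactly the area of a hyperbolic disk of radius $\frac{w}{2}$. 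Since the minimal width of a disk equals its diameter, the disk of thickness $w$ has radius $\frac{w}{2}$, so this limit is its area. Combining the two parts, every ordinary reduced $(2k+1)$-gon has area at most $\mathcal{A}(2k+1)<\lim_{n\to\infty}\mathcal{A}(n)=2\pi(\cosh\frac{w}{2}-1)$, the area of the circle of thickness $w$.

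I expect the main obstacle to be the boundary analysis at $x=0$: both $F_w(0)=\frac{\pi}{2}$ and $F_w'(0^+)=-\cosh\frac{w}{2}$ require resolving apparent singularities, since the denominators $\tanh w-g_w(x)$ and $\sqrt{(1-\cos x)(\cdots)}$ both vanish as $x\to 0$, and it is precisely these two limits that both close the proof of monotonicity (via $h(0)=0$) and identify the limiting area with the disk. A minor point worth flagging is that I only invoke the strict convexity of $F_w$ on $(0,\pi/3]$, where $\cos x>0$, so the sign argument from Theorem~\ref{thm:extremum} carries over without any change of range.
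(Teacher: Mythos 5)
Your proposal is correct, and it takes a genuinely different route from the paper's. The paper never differentiates in $n$: instead it uses \eqref{eq:cosphi} and \eqref{eq:tanhbi} to express $n=\pi/\arccos\frac{g_w\left(\pi/n\right)\left(1-\tanh w\, g_w\left(\pi/n\right)\right)}{\tanh w-g_w\left(\pi/n\right)}$, rewrites $\area(\widetilde{R}_n)+2\pi$ as $2\pi\, h_w\bigl(g_w\left(\pi/n\right)\bigr)$ for an explicit ratio of arccosines $h_w$, notes that $g_w\left(\pi/n\right)$ increases in $n$ (since $g_w'<0$), and then proves monotonicity of $h_w$ on $\bigl(0,g_w(0)\bigr)$ through a two-stage analysis of an auxiliary function $\overline{h}_w$: both $\overline{h}_w$ and $\overline{h}_w'$ vanish at the endpoint $g_w(0)=\frac{1-\sqrt{1-\tanh^2w}}{\tanh w}$ and $\overline{h}_w''>0$. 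You instead treat $n$ as a continuous variable, differentiate $\mathcal{A}(n)$ directly, and reduce everything to the convexity of $F_w$ already computed in the proof of Theorem~\ref{thm:extremum}, via the clean cancellation $h'(x)=2xF_w''(x)$, together with the two boundary limits $F_w(0^+)=\frac{\pi}{2}$ and $F_w'(0^+)=-\cosh\frac{w}{2}$; I checked both limits (your identity $g_w(0)=\tanh\frac{w}{2}$ holds, and the asymptotics of the paper's formula for $F_w'$ indeed give $-\sqrt{2}\sinh w/\bigl(2\sqrt{\cosh w-1}\bigr)=-\cosh\frac{w}{2}$). Your route buys three things: it avoids the heavy $\overline{h}_w$ computation entirely; it is careful about exactly the point where the paper is terse, since the sign argument for $F_w''>0$ requires $\cos x>0$ and your restriction $x=\pi/n\le\pi/3$ guarantees this; and it actually proves the limiting value $\lim_{n\to\infty}\mathcal{A}(n)=2\pi\left(\cosh\frac{w}{2}-1\right)$ by a first-order expansion, whereas the paper asserts this limit in its final line without computation. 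What the paper's route buys in exchange is a closed-form expression for the regular polygon's area as a ratio of arccosines, which is of some independent interest; otherwise the two arguments rest on comparable calculus, and yours leans more efficiently on work already done.
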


\begin{proof}
If $R_n$ is an ordinary reduced polygon of thickness $w$ with $n$ vertices, and $\widetilde{R}_n$ denotes a regular $n$-gon of thickness $w$, then by Theorem~\ref{thm:extremum},
\begin{gather*}
\area\left(R_n\right)\leq\area\left(\widetilde{R}_n\right)=\left(n-2\right)\pi-2n F_w\left(\frac{\pi}{n}\right)=2n\left(\frac{\pi}{2}-f_w\left(g_w\left(\frac{\pi}{n}\right)\right)\right)-2\pi=\\
=2n\arccos\frac{g_w\left(x\right)\sqrt{1-\tanh^2 w}}{\tanh w-g_w\left(x\right)}-2\pi.
\end{gather*}
From \eqref{eq:cosphi} and \eqref{eq:tanhbi} we get
$$
n=\frac{\pi}{\arccos\frac{g_w\left(\frac{\pi}{n}\right)\left(1-\tanh w\cdot g_w\left(\frac{\pi}{n}\right)\right)}{\tanh w-g_w\left(\frac{\pi}{n}\right)}},
$$
and hence
$$
\area\left(\widetilde{R}_n\right)=2\pi\frac{\arccos\frac{g_w\left(\frac{\pi}{n}\right)\sqrt{1-\tanh^2 w}}{\tanh w-g_w\left(\frac{\pi}{n}\right)}}{\arccos\frac{g_w\left(\frac{\pi}{n}\right)\left(1-\tanh w g_w\left(\frac{\pi}{n}\right)\right)}{\tanh w-g_w\left(\frac{\pi}{n}\right)}}-2\pi.
$$
We observe that
$$
g_w'\left(x\right)=\frac{-\sin x}{r_w\left(x\right)}\left(\tanh w-g_w\left(x\right)\right)<0
$$
for $\tanh w>g_w\left(x\right)$, so $g_w\left(\frac{\pi}{n}\right)$ increases in $n$. Hence, to see that the area of $\widetilde{R}_n$ increases in $n$, it is sufficient to see that the function
$$
h_w\left(x\right)=\frac{\arccos\frac{x\sqrt{1-\tanh^2 w}}{\tanh w-x}}{\arccos\frac{x\left(1-x\tanh w\right)}{\tanh w-x}}
$$
increases for $0<x<g_w\left(0\right)=\frac{1-\sqrt{1-\tanh^2 w}}{\tanh w}$. We note that
$$
h_w'\left(x\right)=\frac{\tanh w \cdot\overline{h}_w\left(x\right)}{\left(\arccos\frac{x\left(1-x\tanh w\right)}{\tanh w-x}\right)^2\left(\tanh w-x\right)\sqrt{\left(1-x^2\right)\left(\tanh^2 w x^2+\tanh^2 w-2x\tanh w\right)}}
$$
where
\begin{gather*}
\overline{h}_w\left(x\right)=\left(x^2+1-2x\tanh w\right)\arccos\frac{x\sqrt{1-\tanh^2 w}}{\tanh w-x}-\\
\sqrt{\left(1-\tanh^2 w\right)\left(1-x^2\right)}\arccos\frac{x\left(1-x\tanh w\right)}{\tanh w-x}.
\end{gather*}
We note that since $0<x<\tanh w$, the sign of $h_w'$ and $\overline{h}_w$ coincide. Differentiating $\overline{h}_w$, we get
$$
\overline{h}'_w\left(x\right)=\frac{x\sqrt{1-\tanh^2 w}}{\sqrt{1-x^2}}\arccos\frac{x\left(1-x\tanh w\right)}{\tanh w-x}-2\left(\tanh w-x\right)\arccos\frac{x\sqrt{1-\tanh^2 w}}{\tanh w-x}.
$$
Observe that
$$
\overline{h}_w\left(\frac{1-\sqrt{1-\tanh^2 w}}{\tanh w}\right)=\overline{h}'_w\left(\frac{1-\sqrt{1-\tanh^2 w}}{\tanh w}\right)=0,
$$
so to verify that $\area\left(\widetilde{R}_n\right)$ increases in $n$, it is enough to see that $\overline{h}''_w\left(x\right)>0$, because then for $0<x<\frac{1-\sqrt{1-\tanh^2 w}}{\tanh w}$, we deduce $\overline{h}'_w\left(x\right)<\overline{h}'_w\left(\frac{1-\sqrt{1-\tanh^2 w}}{\tanh w}\right)=0$, and hence $\overline{h}_w\left(x\right)>\overline{h}_w\left(\frac{1-\sqrt{1-\tanh^2 w}}{\tanh w}\right)=0$. Indeed, we have
\begin{gather*}
\overline{h}''_w\left(x\right)=2\arccos\frac{x\sqrt{1-\tanh^2 w}}{\tanh w-x}+\frac{\sqrt{1-\tanh^2 w}}{\left(1-x^2\right)^{\frac{3}{2}}}\arccos\frac{x\left(1-x\tanh w\right)}{\tanh w-x}+\\
\frac{\tanh w \sqrt{1-\tanh^2 w}}{\left(1-x^2\right)\left(\tanh w\right)\sqrt{\tanh^2 w x^2+\tanh^2 w-2x}}\left(x^3-3x+2\tanh w\right),
\end{gather*}
and that is positive, as $x^3-3x+2\tanh w>0$ if $0<x<\frac{1-\sqrt{1-\tanh^2 w}}{\tanh w}$.

Finally, we observe that
$$
\area\left(R_n\right)<\lim_{m\to\infty}\area\left(\widetilde{R}_m\right)=2\pi\left(\cosh\frac{w}{2}-1\right),
$$
that is the area of a circular disk of radius $\frac{w}{2}$.
\end{proof}

As a final remark, we note that the area of the quarter of the disk (which is also reduced) is greater than the area of the circle of the same thickness, and the author has no knowledge of any reduced body that has greater area in $H^2$.

\noindent{\bf Acknowledgement: } The author wants to thank K\'aroly J. B\"or\"oczky and Ansgar Freyer for their useful comments on the manuscript.

\end{document}